\documentclass{article}

\usepackage{arxiv}

\usepackage[utf8]{inputenc} 
\usepackage[T1]{fontenc}    
\usepackage{hyperref}       
\usepackage{url}            
\usepackage{booktabs}       
\usepackage{amsfonts}       
\usepackage{nicefrac}       
\usepackage{microtype}      
\usepackage{lipsum}
\usepackage{amsmath,amsthm,amssymb}
\usepackage[all]{xy}
\usepackage{graphicx}
\usepackage{mathrsfs}
\usepackage{ upgreek}
\theoremstyle{plain}
\theoremstyle{remark}

\newtheorem*{convention*}{Convention}
\theoremstyle{plain}
\newtheorem{theorem}{Theorem}[section]

\theoremstyle{definition}

\newtheorem{example}[theorem]{Example}



\newcommand{\bbR}{{\mathbb R}}




\title{A  Mathematical  Model for Whorl Fingerprint}

\author{Ibrahim Jawarneh\\
  Department of Mathematics\\
 Al-Hussein Bin Talal University\\
Ma'an, P.O. Box (20), 71111, Jordan\\
  \texttt{ibrahim.a.jawarneh@ahu.edu.jo} \\
   \And
  Nesreen Alsharman\\
  Computer Science\\
The World Islamic Sciences and Education University\\
 Amman, Jordan\\
  \texttt{nesreen.alsharman@wise.edu.jo} \\
}

\begin{document}
\maketitle

\begin{abstract}
In this paper, different classes of the whorl fingerprint are discussed.  A general dynamical system with a parameter $\theta$ is created using differential equations to simulate these classes by varying the value of $\theta$. The global dynamics is studied, and  the existence and stability of equilibria are analyzed.
The Maple is used to visualize fingerprint’s orientation image as a smooth deformation of the phase portrait of a planar dynamical system.
\end{abstract}

\keywords{Whorl fingerprint \and Concentric whorl \and Spiral whorl \and Composite whorl with "S" core \and Simulations of the whorl fingerprint.}

\section{Introduction} \label{FP introduction}
\hspace{\parindent} 
Fingerprints are a set of raised lines that form unique patterns on the pads of the fingers and thumbs. Everyone leaves parts or entire fingerprints on many things through our daily activities by touching cups, doors, books, etc., so studying fingerprints is important in security especially no two people have been found to have the same fingerprints. One of the early studies about fingerprints appeared in 1892 by Sir Francis Galton in his book, finger prints \cite{Galton1892}. There are three general types of fingerprints; loop, whorl, and arch.  The whorl type occurs in about 25–35\% of all fingerprints, see \cite{Arent2019,Galton1892,Henry1990}. 
\par 
\indent The whorl patterns display in the way that the ridges in the center tend to show a circular orientation with a core to whorl and two  deltas in the right and left sides. The focus of this paper is on three basic categories of whorl fingerprint which are concentric whorl, spiral whorl, and composite whorl with "S" core:
\begin{itemize}
\item Concentric whorl, this pattern represents the most basic form of a whorl in which the core is circular or elliptical in the center of the fingerprint,  see   picture (a) in figure \ref{concentric, spiral and Composite fingerprint}.
\item Spiral whorl,  the ridges flow are in winding way in the center making a spiral core, see picture (b) in figure \ref{concentric, spiral and Composite fingerprint}.
\item Composite whorl with "S" core, this pattern twists its ridges in the way that forms a core in  "S" shape in the center, see picture (c) in  figure \ref{concentric, spiral and Composite fingerprint}.
\end{itemize}
\begin{figure}[ht]
\centering
\begin{minipage}[c]{0.33\linewidth}
\centering
\includegraphics[width=2in]{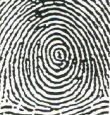}\\ \small (a)
\end{minipage}%
\begin{minipage}[c]{0.33\linewidth}
\centering
\includegraphics[width=2in]{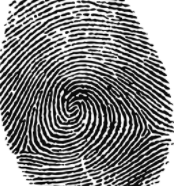}\\ \small (b)
\end{minipage}%
\centering
\begin{minipage}[c]{0.31\linewidth}
\centering
\includegraphics[width=2in]{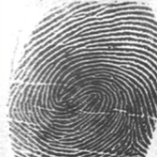}\\ \small (c)
\end{minipage}%
\caption{(a) Concentric whorl,  (b) spiral whorl  and (c) composite whorl}
\label{concentric, spiral and Composite fingerprint}
\end{figure}
\par
\indent Few studies talked about modeling of the whorl fingerprint specially using the phase portraits of a system of differential equations. The idea of phase portraits in texture modelling can be seen in \cite{Rao1990} where a characterizing oriented patterns was proposed using the qualitative differential equation theory to analyze real texture images, but no fingerprint image seems to have been considered. In the thesis by Ford \cite{Ford1994}, the complex flows were divided into simpler components which are modeled by linear phase portraits and then combined to obtain a model for the entire flow field, this idea was applied to fingerprints in \cite{Li2006}. 
\par 
\indent Fingerprint can be captured as graphical ridge and valley patterns, so the global representation of the above categories of fingerprint’s flow-like patterns as a smooth deformation of the phase portrait of a system of differential equations is considered in this paper. Using differential equations in the formularization of the general dynamical system that describes concentric, spiral, and composite whorl fingerprint requires understanding the behaviour of the ridges and interpreting the deltas and cores that appear in these classes, and  how the singular points that represent the core to whorl and deltas in the patterns of phase portraits look like in the considered model.
\par 
\indent The system of differential equations basically has two types of singular points; first type is nondegenerate singular point (The Jacobian matrix of  the system has no zero eigenvalues), the second type is degenerate singular point (The Jacobian matrix has at least one zero eigenvalue). For the core to whorl can be represented as center or spiral singular point in the system of differential equations, but the delta can be interpreted as a singular point with three heperbolic sectors, and there is no such singular point exist. So the closer singular point for the delta is the cusp with cutting appropriate edge of it.   To know more information about the cusp, we present the following theorem, including determining some types of degenerate  equilibrium points of the planar system that are found in \cite{Perko2001}. Assume that the origin is an isolated critical point of the planar system 
  \begin{equation} \label{planar system}
  \begin{aligned} 
    \dot{x} =P(x,y),
    \\
     \dot{y} = Q(x,y).
  \end{aligned} 
  \end{equation}
where $P(x,y)$ and $Q(x,y)$ are analytic in some neighborhood of the origin, and consider the case when the  matrix $A = Df(0)$ has two zero eigenvalues i.e., $det(A) = 0$, $tr (A) = 0$, but $A \neq 0$, in this case  the system \eqref{planar system} can be put in the normal form
\begin{equation}\label{normal form}
 \begin{aligned} 
  \dot{x} &= y,
\\
\dot {y}  &= a_kx^k [1 + h(x)] + b_nx^ny[1 + g(x)] + y^2R(x, y).
\end{aligned}   
\end{equation}
where $h(x)$, $g(x)$ and $R(x, y)$ are analytic in a neighborhood of the origin, $h(0) = g(0) = 0, k \geq 2, a_k \neq 0$ and $n \geq 1$.
\begin{theorem}[Theorem 2 and theorem 3, page 151
\cite{Perko2001}]\label{theorem 2 and theorem 3}.
\begin{enumerate}
    \item Let $k=2m+1$ with $m\geq 1$ in \eqref{normal form} and let $\lambda = b_n^2 + 4(m+1)a_k$. Then if $a_k > 0$, the origin is a (topological) saddle. If $a_k < 0$, the origin  is
    \begin{itemize}
        \item a focus or a center if $b_n =0$ and also if $b_n \neq 0$ and $n > m$ or if $n = m$ and $\lambda < 0$,
        \item a node if $b_n \neq 0$, n is an even number and $n < m$ and also if $b_n \neq 0$, $n$ is an even number, $n = m$ and $\lambda \geq 0$, 
        \item  a critical point with an elliptic domain if $b_n \neq 0$, $n$ is an odd number and $n < m$ and also if $b_n \neq 0$, $n$ is an odd number, $n = m$ and $\lambda \geq 0$.
    \end{itemize}
    \item  Let $k = 2m$ with $m \geq 1$ in \eqref{normal form}. Then the origin is
    \begin{itemize}
        \item a cusp if $b_n = 0$ and also if $b_n \neq 0$ and $n \geq m$,
        \item a saddle-node if $b_n \neq 0$ and $n < m$.
    \end{itemize}
\end{enumerate}
\end{theorem}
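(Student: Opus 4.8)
The plan is to desingularize the origin by a quasi-homogeneous blow-up adapted to the Newton polygon of the normal form \eqref{normal form}, reduce to a vector field all of whose singularities lying on the (compact) exceptional divisor are elementary (hyperbolic or semi-hyperbolic), classify those by the standard linear and semi-hyperbolic theory, and then blow down, using the dictionary between the sectors of the original singularity and those of the desingularized field to read off the local phase portrait. Analyticity of $P$ and $Q$ together with the standing assumption that the origin is isolated guarantees that the origin has finite multiplicity, so such a desingularization terminates; in fact a single blow-up will essentially suffice, with at most one further blow-up needed to exhibit the elliptic sector in one sub-case.

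Consider first $k=2m+1$. Writing the principal part $\dot x=y,\ \dot y=a_kx^k$ in quasi-homogeneous form fixes the weights $(1,m+1)$ for $(x,y)$ and time weight $m$, so the relevant blow-up is generated by $y=x^{m+1}u$ in the $x$-direction charts (with sign conventions $x=\pm r$, $y=r^{m+1}u$, $r\ge 0$) together with the $y$-direction chart covering directions near the $y$-axis. Substituting $y=x^{m+1}u$ and rescaling time by $x^{m}$ (legitimate for $x>0$; the chart $x<0$ is where the parity of $k$ enters), a direct computation transforms the second equation into
\[
\frac{dx}{d\tau}=xu,\qquad \frac{du}{d\tau}=a_k[1+h(x)]-(m+1)u^{2}+b_nx^{\,n-m}u[1+g(x)]+xu^{2}R .
\]
Restricting to the exceptional divisor $x=0$ gives $u'=a_k-(m+1)u^{2}$ when $n>m$, and $u'=a_k+b_nu-(m+1)u^{2}$ when $n=m$; in the latter case the divisor singularities solve $(m+1)u^{2}-b_nu-a_k=0$, whose discriminant is exactly $\lambda=b_n^{2}+4(m+1)a_k$ --- this is the source of the quantity $\lambda$ and the factor $m+1$ in the statement, and the factor $-(m+1)$ comes from the blow-up geometry, not from $R$ (which is of higher weight here). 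Hence: if $a_k>0$ the divisor carries two hyperbolic-type singularities (a source and a sink along the divisor), and assembling all charts together with the sign flip of $x^{k}$ on $x<0$ produces four hyperbolic sectors, i.e.\ a topological saddle. If $a_k<0$ and either $b_n=0$, or $n>m$, or $n=m$ with $\lambda<0$, then $u'$ has constant sign on the divisor (and likewise in the $y$-chart), so the desingularized field has no singularity on the divisor; blowing down collapses the divisor to the origin, which is then a focus or a center. When $a_k<0$, $n=m$ and $\lambda\ge 0$ the two divisor singularities are semi-hyperbolic, and whether blowing them down yields a node or a critical point with an elliptic domain is decided by the parity of $n$ (equivalently, by which sectors the centre manifolds of those semi-hyperbolic points bound); the case $n<m$ is treated by instead blowing up with the weight adapted to the term $b_nx^{n}y$, and there again the parity of $n$ selects node versus elliptic domain.

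For $k=2m$ (part 2) the same procedure applies with the weights $(2,2m+1)$ forced by $\dot x=y,\ \dot y=a_kx^{2m}$. Because $2m+1$ is odd, the exceptional divisor now carries a single semi-hyperbolic singularity rather than a symmetric pair, and blowing it down produces two hyperbolic sectors when the resonant term $b_nx^{n}y$ does not interfere --- that is, when $b_n=0$ or $n\ge m$ --- which is precisely a cusp; when $n<m$ the term $b_nx^{n}y$ dominates and the local picture is that of a saddle-node. In every case the closing step is the blow-down dictionary: each hyperbolic (resp.\ parabolic, resp.\ elliptic) sector of the desingularized field, read off its finitely many elementary singularities on the exceptional divisor, corresponds to a sector of the same type at the origin, and counting them gives the stated classification.

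The main obstacle I expect is bookkeeping rather than a single deep step: correctly assembling all coordinate charts of the quasi-homogeneous blow-up (including the vertical $y$-direction chart) with consistent orientation and sign conventions, and tracking how the parity of $k$ (the left/right symmetry of $x^{k}$) and the parity of $n$ propagate through to the sector count. Secondarily, the sub-case $a_k<0$, $n\le m$ distinguishing a node from a critical point with an elliptic domain is the delicate one, since it is exactly where a second desingularization (or a careful analysis of the semi-hyperbolic centre manifolds) is required; and one must invoke the finite-multiplicity / termination-of-desingularization theorem to be certain that no pathological behaviour (for instance a monodromic point with infinitely many sectors) can occur.
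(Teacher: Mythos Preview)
The paper does not actually prove this theorem: it is quoted verbatim from Perko's textbook as a background result, with no argument supplied (and Perko himself largely refers the reader to Andronov et al.\ for the details). So there is no ``paper's own proof'' to compare your proposal against.

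That said, your outline is a sound and standard route to this classification. The quasi-homogeneous weights $(1,m+1)$ for $k=2m+1$ and $(2,2m+1)$ for $k=2m$ are the correct ones dictated by the Newton polygon of the principal part, your computation in the $x$-chart is accurate (in particular the appearance of $\lambda=b_n^{2}+4(m+1)a_k$ as the discriminant on the exceptional divisor is exactly right), and the dictionary between elementary singularities on the divisor and sectors at the origin after blow-down is the right closing mechanism. Your own caveats are well placed: the genuinely delicate sub-case is $a_k<0$ with $n\le m$, where distinguishing a node from an elliptic-domain point requires either a second blow-up or a careful centre-manifold argument at the semi-hyperbolic divisor singularities, and the chart-gluing and parity bookkeeping is where most errors in such arguments occur. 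If you were to write this up in full you would essentially be reproducing the modern desingularization proof (as in Dumortier--Llibre--Art\'es), which is somewhat more geometric than the classical sector-by-sector analysis that Perko and Andronov et al.\ present, but leads to the same conclusions.
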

 It is clear now from the above theorem that if  the Jacobian matrix $Df(x_0)$ has two zero eigenvalues, then the critical point $x_0$ is either a focus, a center, a node, a (topological) saddle, a saddle-node, a cusp, or a critical point with an elliptic domain.
 \newline
In \cite{zinoun2018fingerprint}, Zinoun used the Taylor polynomials and the normal forms then applied the theorem \eqref{theorem 2 and theorem 3} to formulate some systems of the classes of the whorl fingerprint such as the concentric whorl
\begin{equation}\label{FP-whorl-sys 1st order}
 \begin{aligned} 
  \dot{x} &= y,
\\
\dot {y}  &=  -x(x^2 -1)^2.
\end{aligned}   
\end{equation}
and the spiral whorl fingerprint 
\begin{equation}\label{FP-whorl-sys 1st order}
 \begin{aligned} 
  \dot{x} &= y,
\\
\dot {y}  &= (y -x/2)(x^2 -1)^2.
\end{aligned}   
\end{equation}
In this research, the spiral whorl fingerprint is developed, a new model for composite whorl with "S" core is suggested and  all above categories of the whorl fingerprint are generalized in a dynamical system with a parameter $\theta$ as follows
\begin{equation}\label{FP-whorl-sys}
 \begin{aligned} 
\ddot{x} -({\theta}\dot{x} -x)(x^2 -1)^2 =0, \quad  \theta \in \bbR.
\end{aligned}   
\end{equation}
Equation \eqref{FP-whorl-sys} can be written  as a first order system
\begin{equation}\label{FP-whorl-sys 1st order}
 \begin{aligned} 
  \dot{x} &= y,
\\
\dot {y}  &=   -x(x^2 -1)^2 + {\theta}y(x^2 -1)^2 , \quad  \theta \in \bbR.
\end{aligned}   
\end{equation}
\par
This paper is organized as follows:  In the next section, we study the stability of the equilibria of the system \eqref{FP-whorl-sys 1st order}.  In section \ref{FP-whorl Discussion}  
an interesting simulations are shown for the three categories of the whorl fingerprint. A brief results are summarized in section \ref{FP-whorl conculusion}.
\section{Steady States and  Their Stability} \label{FP-whorl staeady state}
\hspace{\parindent}
To study the stability of the system $\eqref{FP-whorl-sys 1st order}$, we find the equilibria first, which are the solutions of the following equations:
\begin{align} 
    0  & = y,  \label{whorl FP 1st order 1st eqn}
    \\
    0  & =  -x(x^2 -1)^2 + {\theta}y(x^2 -1)^2. \label{whorl FP 1st order 2nd eqn}
\end{align}  
and are given by $E_0 =(0,0)$, $E_1 (1,0)$,and $E_2 = (-1,0)$ which are called equilibrium points or singular points. The Jacobian matrix of $\eqref{FP-whorl-sys 1st order}$ takes the form
\begin{equation} \label{Jacobian Arch}
J =
  \begin{bmatrix}
  0 & 1 \\
-(x^2 -1)^2 + 4({\theta}y -x)(x^2 -1)x & \theta(x^2 -1)^2
\end{bmatrix}.
\end{equation}
The Jacobian matrix evaluated at the equilibrium point $E_0 = (0,0)$ is
\begin{equation} \label{JacobianE_0 whorl}
J(E_0) =
  \begin{bmatrix}
  0 &  1\\
-1& \theta
\end{bmatrix}.
\end{equation}
We summarize the stability of $E_0$ in the following theorem.
\begin{theorem} \label{stability E_0 whorl}
 The equilibrium point $E_0 = (0,0)$ is  stable if $\theta < 0$, unstable if $\theta > 0$, and center if $\theta = 0$.
\end{theorem}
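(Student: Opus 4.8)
The plan is to determine the local phase portrait at $E_0$ directly from the linearisation $J(E_0)$ whenever that matrix is hyperbolic, and to treat the remaining borderline value $\theta=0$ by exhibiting a conserved quantity. First I would write down the characteristic polynomial of the matrix in \eqref{JacobianE_0 whorl}, which is $\lambda^2-\theta\lambda+1=0$, so that $\operatorname{tr}J(E_0)=\theta$ and $\det J(E_0)=1$ for every $\theta\in\bbR$; the eigenvalues are $\lambda_\pm=\tfrac12\bigl(\theta\pm\sqrt{\theta^2-4}\bigr)$.

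For $\theta<0$ the determinant is positive and the trace is negative, hence both eigenvalues have negative real part (a stable focus when $-2<\theta<0$, a stable node when $\theta\le-2$); by the Hartman--Grobman theorem $E_0$ is asymptotically stable, in particular stable. For $\theta>0$ the same eigenvalues have positive real part, so $E_0$ is a source and therefore unstable. These two cases are routine applications of linearisation.

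The only delicate case is $\theta=0$, where $J(E_0)$ has the purely imaginary eigenvalues $\pm i$: the linear system is a center, but since the equilibrium is then non-hyperbolic this does not by itself decide the nonlinear behaviour, and this is the step I expect to be the main obstacle. To resolve it I would observe that for $\theta=0$ the system \eqref{FP-whorl-sys 1st order} becomes $\dot x=y$, $\dot y=-x(x^2-1)^2$, which is Hamiltonian with
\[
H(x,y)=\tfrac12 y^2+\tfrac16 x^6-\tfrac12 x^4+\tfrac12 x^2,
\]
since $\partial H/\partial y=y$ and $-\partial H/\partial x=-\bigl(x^5-2x^3+x\bigr)=-x(x^2-1)^2$. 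Thus $H$ is constant along orbits, $H(0,0)=0$, and the Hessian of $H$ at the origin is the identity matrix, so $H$ has a strict local minimum at $E_0$; consequently the level sets $\{H=c\}$ for small $c>0$ are closed curves surrounding $E_0$, which makes $E_0$ a (nonlinear) center and hence Lyapunov stable. Equivalently one may note that the $\theta=0$ field is reversible under $(x,y,t)\mapsto(x,-y,-t)$ and combine this symmetry with the linear center; I would present the Hamiltonian computation as the main argument and mention the reversibility as an alternative. Collecting the three cases yields the statement.
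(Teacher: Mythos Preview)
Your argument is correct and, for $\theta\neq 0$, coincides with the paper's: both compute the eigenvalues $\lambda_\pm=\tfrac12\bigl(\theta\pm\sqrt{\theta^2-4}\bigr)$ of $J(E_0)$ and read off stability or instability from the sign of the real parts.

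The genuine difference is at $\theta=0$. The paper simply observes that $\lambda_{1,2}=\pm i$ and declares $E_0$ a center on that basis, i.e.\ it stops at the linearisation. You correctly flag that this equilibrium is non-hyperbolic and that the linear center need not persist, and you close the gap by exhibiting the first integral $H(x,y)=\tfrac12 y^2+\tfrac16 x^6-\tfrac12 x^4+\tfrac12 x^2$ with a strict local minimum at the origin (equivalently, by invoking the time-reversal symmetry $(x,y,t)\mapsto(x,-y,-t)$). Your treatment therefore proves more: it establishes a \emph{nonlinear} center, whereas the paper's proof, read literally, only identifies the linear type. The cost is a short extra computation; the benefit is that the $\theta=0$ conclusion is fully justified rather than asserted.
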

\begin{proof}
From the Jacobian matrix \eqref{JacobianE_0 whorl}, the eigenvalues of $J(E_0)$ are
\begin{equation} \label{eigenvalues of E_0 whorl}
 \lambda_{1,2} = \frac{\theta \pm \sqrt{\theta^2 - 4}}{2}. 
\end{equation}
Notice that the eigenvalues of the equilibrium point $E_0 = (0,0)$ depend only on the parameter $\theta$. When $ \theta < 0$, the real parts of the eigenvalues are negative, so $E_0$ is stable. Similarly, when  $\theta  > 0$, the real parts of the eigenvalues are positive, and $E_0$ is unstable. Finally, when $\theta  = 0$, we get  $ \lambda_{1,2} = \pm i$, and so $E_0$ is center. 
\end{proof}
The equilibrium points $E_1 = (-1,0)$ and $E_2 = (1,0)$ have the same  Jacobian matrix 
\begin{equation} \label{JacobianE_{1,2} whorl}
J(E_1) = J(E_2) =
  \begin{bmatrix}
  0 &  1\\
0& 0
\end{bmatrix}.
\end{equation}
We summarize the stability of $E_{1}=(-1,0)$ and $E_2 =(1,0)$ in the following theorem
\begin{theorem}
 The equilibrium points $E_{1}=(-1,0)$ and $E_2 =(1,0)$ are cusps.
\end{theorem}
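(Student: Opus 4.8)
The plan is to apply Theorem \ref{theorem 2 and theorem 3} (part 2) to each of the two equilibria after translating them to the origin and putting the system into the normal form \eqref{normal form}. Since $E_1=(-1,0)$ and $E_2=(1,0)$ share the same Jacobian \eqref{JacobianE_{1,2} whorl}, which has $\det = 0$, $\operatorname{tr}=0$, but is not the zero matrix, we are exactly in the setting preceding the theorem, so a normal-form reduction is guaranteed to exist. The first step is the change of variables $u = x \mp 1$, $v = y$ (treating $E_2$ first, then $E_1$ analogously), rewriting \eqref{FP-whorl-sys 1st order} in the $(u,v)$ coordinates. The factor $(x^2-1)^2 = (x-1)^2(x+1)^2$ becomes, near $E_2$, $u^2(u+2)^2$, which already carries the double zero that forces degeneracy.

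Next I would read off the exponents $k$ and $n$ from the resulting $\dot v$ equation. After substitution, $\dot u = v$ and $\dot v = -(u+1)\,u^2(u+2)^2 + \theta v\, u^2(u+2)^2$. The first term expands as $a_k u^k[1+h(u)]$ with leading term $-4u^2$, so $a_k = -4$, $k=2$, hence $m=1$; the second term is $b_n u^n v[1+g(u)]$ with leading term $4\theta u^2 v$, so (when $\theta\neq 0$) $n=2$. Thus $k=2m$ with $m=1$, and either $b_n = 0$ (when $\theta = 0$) or $b_n\neq 0$ with $n = 2 \geq m = 1$; in both cases part 2 of Theorem \ref{theorem 2 and theorem 3} yields a cusp. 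The same computation at $E_1$ gives $(x^2-1)^2 = u^2(u-2)^2$ near $u = x+1$, with leading coefficients $a_k = -4$, $k=2$, and $b_n$ coming from $4\theta u^2 v$ again, so the identical conclusion holds. Hence both $E_1$ and $E_2$ are cusps for every $\theta\in\bbR$.

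One technical point deserves care: the normal form \eqref{normal form} requires that the linear part already be in the Jordan shape $\dot x = y$, $\dot y = (\text{higher order})$, and in our case $J(E_1)=J(E_2)$ is precisely $\begin{bmatrix}0&1\\0&0\end{bmatrix}$, so \emph{no} linear coordinate change is needed --- the translated system is already in the form \eqref{normal form} with $R(x,y)\equiv 0$. This is the main thing to verify, and it is immediate here because the $\dot x$-equation is exactly $\dot x = y$ and the $\dot y$-equation has no linear terms at the translated point (every term carries the factor $u^2$). The remaining work is the routine bookkeeping of matching the expansions to the symbols $a_k, h, b_n, g$, which I would state compactly rather than expand in full. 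The only genuine case distinction is $\theta = 0$ versus $\theta \neq 0$, and since the theorem covers $b_n = 0$ and $b_n \neq 0,\ n\geq m$ with the same verdict, the cusp conclusion is uniform in $\theta$.
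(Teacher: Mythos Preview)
Your approach is the same as the paper's: translate each equilibrium to the origin, observe that the linear part is already in the Jordan form $\begin{pmatrix}0&1\\0&0\end{pmatrix}$, read off $k=2$ from the expansion, and invoke part~2 of Theorem~\ref{theorem 2 and theorem 3}. Your treatment is in fact more careful than the paper's, which asserts $b_n=0$ without qualification; you correctly distinguish $\theta=0$ (where indeed $b_n=0$) from $\theta\neq 0$ (where $b_n=4\theta$ and $n=2\ge m=1$), and note that the theorem gives a cusp in both cases.

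One small slip: at $E_1=(-1,0)$, with $u=x+1$, the leading term of $-x(x^2-1)^2=(1-u)u^2(u-2)^2$ is $+4u^2$, not $-4u^2$, so $a_k=+4$ there. This is harmless, since part~2 of the theorem places no sign condition on $a_k$; the cusp conclusion stands.
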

\begin{proof}
The eigenvalues in this case are $\lambda_{1,2}$ = 0 which means degenerate  equilibrium points, i.e., $det(J) = 0$, $tr(J) = 0$, but $J \neq 0$. In this case it is shown in the introduction that the system can be put in the normal form to which functions can be reduced in a neighbourhood of the degenerate critical points $E_{1}=(-1,0)$ and $E_2 =(1,0)$ as the following
\begin{equation}\label{FP-whorl-sys 1st order normal form cusps general}
 \begin{aligned} 
  \dot{x} &= y,
\\
\dot {y}  &=   \pm \alpha(x \pm 1)^2 + o((x \pm 1)^3).
\end{aligned}   
\end{equation}
Apply theorem \eqref{theorem 2 and theorem 3}, $k =2, b_n =0, h(x) =0$ and  $\alpha > 0$ which gives that both $E_1 = (-1,0)$ and $E_2 = (1,0)$ are cusps.
\end{proof}
\section{Simulations and Numerical Results of the Whorl Fingerprint} \label{FP-whorl Discussion}
\hspace{\parindent}
In this section, we display the phase portrait of the system \eqref{FP-whorl-sys 1st order} using particular values of the parameter $\theta$ and  match them with the images in the  above categories of the whorl fingerprint. We get similar shape to the  concentric whorl at the value  $\theta =0$, around and closed to $\theta =0$, we get shapes look like to the spiral whorl and when we increase the value of the parameter $\theta$ to be around  one, we get phase portrait closed to the composite whorl with "S" core. For more details let us go over these cases using Maple software. 
\subsection{Concentric whorl class }
\hspace{\parindent} The basic features of the concentric whorl are the existence of the circular or elliptical ridges in the middle which are represented by the center in the phase portrait, the two deltas which are represented by the two cusps in the phase portrait  in the right and left sides, and  we can draw  two  connections between the deltas through the ridges  which are represented by separatrices between the  cusps from above and below half planes in the phase portrait. Let us put $\theta =0$ in the system \eqref{FP-whorl-sys 1st order}, we get the following system
\begin{example}\label{Example Concentric Whorl}
\begin{equation}\label{theta = 0.001}
 \begin{aligned} 
  \dot{x} &= y,
\\
\dot {y}  &=   -x(x^2 -1)^2.
\end{aligned}   
\end{equation}
\end{example}
Figure \ref{Concentric whorl fingerprint} shows the phase portrait of the example \ref{Example Concentric Whorl}  using Maple and the image that represents the concentric whorl. It easy to compare and matching center with circular ridges in the middle , the cusps with the deltas,   and the separatrices  with the connections  in both pictures.
\begin{figure}[ht]
\centering
\begin{minipage}[c]{0.5\linewidth}
\centering
\includegraphics[width= 3.1 in]{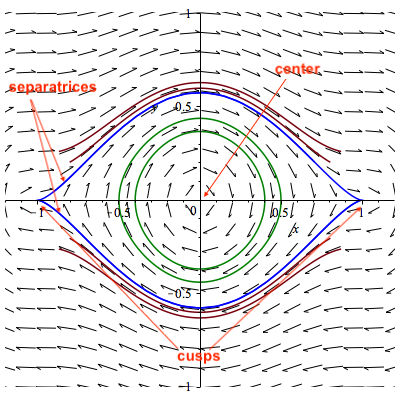}\\ \small (a)
\end{minipage}%
\begin{minipage}[c]{0.5\linewidth}
\centering
\includegraphics[width= 2.8 in]{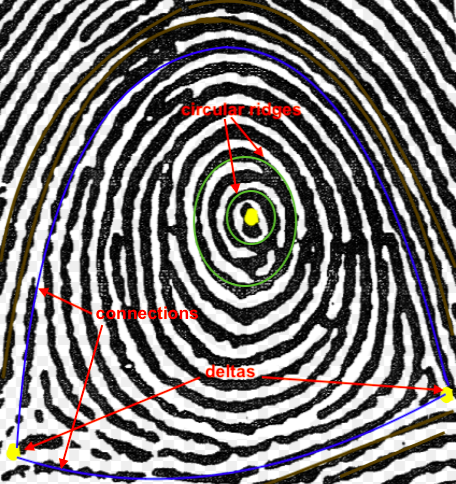}\\ \small (b)
\end{minipage}%
\caption{Simulation of the concentric whorl fingerprint: (a) phase portrait of example  \ref{Example Concentric Whorl} and (b) image of the concentric whorl fingerprint.}
\label{Concentric whorl fingerprint}
\end{figure}
\subsection{Spiral whorl class}
\hspace{\parindent}
In this type, we go over two kinds of the spiral whorl:
\begin{itemize}
    \item UR-LL spiral whorl in which there exist an upper right connection (UR connection) between the spiral core and  the right delta and a lower left connection (LL connection) between the spiral core and the left delta, see picture (b) in figure \ref{UR-LL spiral whorl}.
    \item LR-UL spiral whorl in which there exist a lower right connection (LR connection) between the spiral core and  the right delta and an upper left connection (UL connection) between the spiral core and the left delta, see picture (b) in figure \ref{LR-UL spiral whorl}.
\end{itemize}
To get the first kind of spiral, substitute  $\theta = 0.2$ in the system \eqref{FP-whorl-sys 1st order}, we get the following system
\begin{example}\label{Example UR-LL spiral whorl}
\begin{equation}\label{theta = 0.2}
 \begin{aligned} 
  \dot{x} &= y,
\\
\dot {y}  &=    -x(x^2 -1)^2 + 0.2y(x^2 -1)^2.
\end{aligned}   
\end{equation}
\end{example}
The phase portrait of the example \ref{Example UR-LL spiral whorl} is shown in figure \ref{UR-LL spiral whorl}. We can see the similarity between the phase portrait and  the image that represents  UR-LL spiral whorl  by comparing the focus with the spiral core, the cusps with deltas,   the upper right orbit (UR orbit) between the focus and the right cusp  with the upper right connection (UR connection), and the lower left orbit (LL orbit) between the focus and the left cusp  with the lower left connection (LL connection).
\begin{figure}[ht]
\centering
\begin{minipage}[c]{0.5\linewidth}
\centering
\includegraphics[width= 3.2 in]{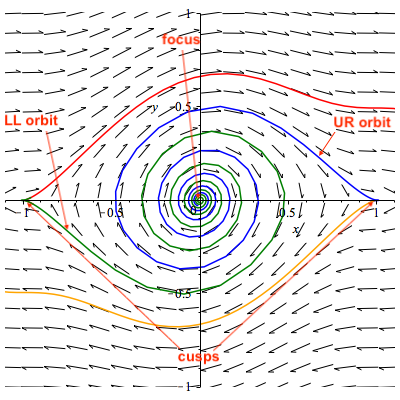}\\ \small(a)
\end{minipage}%
\begin{minipage}[c]{0.5\linewidth}
\centering
\includegraphics[width= 2.2 in]{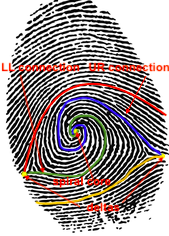}\\ \small(b)
\end{minipage}%
\caption{Simulation of the UR-LL spiral whorl fingerprint: (a) phase portrait of example  \ref{Example UR-LL spiral whorl}  and (b) image of the UR-LL spiral whorl fingerprint.}
\label{UR-LL spiral whorl}
\end{figure}
\par 
\indent To get the kind LR-UL spiral whorl, we should think how to switch the connections in the figure \ref{UR-LL spiral whorl} up side down, this can be happened if we use  negative value of $\theta$, we use $\theta = -0.2$  in the system \eqref{FP-whorl-sys 1st order} which is explained in example \ref{Example LR-UL spiral whorl}.  
\begin{example}\label{Example LR-UL spiral whorl}
\begin{equation}\label{theta = -0.2}
 \begin{aligned} 
  \dot{x} &= y,
\\
\dot {y}  &=    -x(x^2 -1)^2 - 0.2y(x^2 -1)^2.
\end{aligned}   
\end{equation}
\end{example}
The phase portrait of example \ref{Example LR-UL spiral whorl}  is illustrated in figure \ref{LR-UL spiral whorl} which achieves our target. 
\begin{figure}[ht]
\centering
\begin{minipage}[c]{0.5\linewidth}
\centering
\includegraphics[width= 2.7 in]{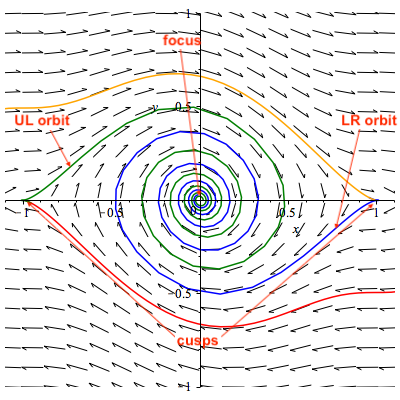}\\ \small(a)
\end{minipage}%
\begin{minipage}[c]{0.5\linewidth}
\centering
\includegraphics[width= 2 in]{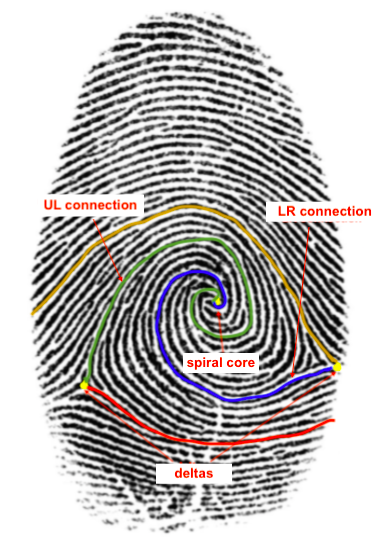}\\ \small(b)
\end{minipage}%
\caption{Simulation of the LR-UL spiral whorl fingerprint: (a) phase portrait of example  \ref{Example LR-UL spiral whorl}  and (b) image of the RU-LL spiral whorl fingerprint.}
\label{LR-UL spiral whorl}
\end{figure}
\subsection{Composite whorl with "S" core class}
\hspace{\parindent} The composite whorl with "S" core  is characterized with a center looks like an "S", and two cusps in the two sides. If $\theta$ in the system \eqref{FP-whorl-sys 1st order} grows up  around one, the flow is twisted enough to create "S" in the middle with keeping the cusps in both sides, for example consider system \eqref{FP-whorl-sys 1st order} with  $\theta = 0.9$ we get the following system
\begin{example}\label{Example Composite whorl with S core}
\begin{equation}\label{theta = 0.9}
\begin{aligned} 
\dot{x} &= y,
\\
\dot {y}  &=    -x(x^2 -1)^2 + 0.9y(x^2 -1)^2.
\end{aligned}   
\end{equation}
\end{example}
Notice the flow in the phase portrait  of the example \ref{Example Composite whorl with S core}  and the friction ridges of the composite whorl with "S" are almost similar. 
\begin{figure}[ht]
\centering
\begin{minipage}[c]{0.5\linewidth}
\centering
\includegraphics[width= 2.8 in]{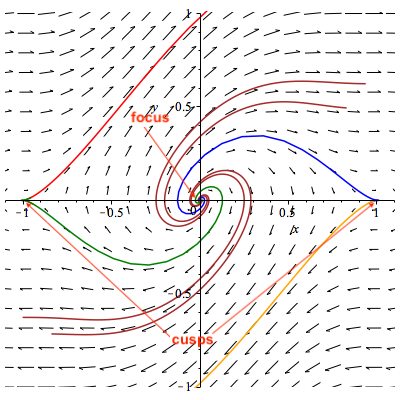}\\ \small(a)
\end{minipage}%
\begin{minipage}[c]{0.5\linewidth}
\centering
\includegraphics[width= 2.4 in]{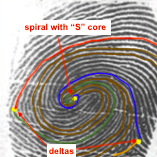}\\ \small(b)
\end{minipage}%
\caption{Simulation of the composite whorl with S core fingerprint: (a) phase portrait of example  \eqref{Example Composite whorl with S core}  and (b) image of the composite whorl with "S" core.}
\label{image of the composite whorl with S core}
\end{figure}
\section{Conclusion} \label{FP-whorl conculusion}
\hspace{\parindent}
The dynamical system \eqref{FP-whorl-sys 1st order} with the parameter  $\theta$ is a good source to generate different categories of the whorl fingerprint. We have noticed that the shape of the flow of this dynamical system at a particular value of the parameter $\theta$ and  the shape of the ridges in the corresponding  image of a category of the whorl fingerprint are almost identical to each other. The flexibility of the system \eqref{FP-whorl-sys 1st order} enable us to simulate a class of whorl depending on how much this class is twisted in either  a clockwise direction or counterclockwise direction. 
\bibliographystyle{unsrt}  


\end{document}